\numberwithin{theorem}{section}
\newcommand*{\PREFIX}{FREMDER}
\newcommand*{\Prefix}{Fremder}
\newcommand*{\prefix}{fremder}
\newcommand{\TheTitle}{On {\PREFIX}vectors: Vectors Orthogonal to Their Images Under Linear Transformations}
\title{{\TheTitle}\thanks{Submitted to the editors \today.
\funding{This work was supported by startup funds from the Institute for Advanced Computational Science at Stony Brook University.}}}
\author{
Matthew G. Reuter\thanks{Department of Applied Mathematics and Statistics \& Institute for Advanced Computational Science, Stony Brook University, Stony Brook, NY 11794 (\email{matthew.reuter@stonybrook.edu})}
}
\begin{document}

\maketitle

\date{\today}

\begin{abstract}
Geometrically, the eigenvectors of a square matrix $\mathbf{A}$ are not rotated by $\mathbf{A}$. Here we consider vectors that are rotated $\pi/2$ by $\mathbf{A}$; that is, vectors orthogonal to their images. We call these vectors {\prefix}vectors of $\mathbf{A}$ and discuss conditions for their existence. We also define {\prefix}values, scalars $z$ such that $z\mathbf{I}-\mathbf{A}$ has a {\prefix}vector, and discuss several known applications for {\prefix}vectors.
\end{abstract}

\begin{keywords}
Eigenvalues, antieigenvalues, rotations, matrix theory.
\end{keywords}

\begin{AMS}
15A04, 15A63.
\end{AMS}

\section{Introduction}
\label{sec:intro}
Determining the eigenvalues and eigenvectors of a square matrix (or linear transformation, more generally) has become a standard topic of introductory linear algebra courses \cite{bk:strang-2006}. Algebraically, the eigenvalues and eigenvectors satisfy the canonical equation
\[
\mathbf{A}x = \lambda x,
\]
where $\mathbf{A}\in\mathbb{C}^{n\times n}$ is the matrix, $\lambda\in\mathbb{C}$ is one of its eigenvalues, and $x\in\mathbb{C}^n$ is an associated eigenvector. Geometrically, an eigenvector can be interpreted as a vector that is not rotated by $\mathbf{A}$; we make no distinction between ``parallel'' and ``anti-parallel'' directions.

Since the 1960s, Gustafson and co-workers \cite{bk:gustafson-2012} have more deeply examined the rotation of vectors by $\mathbf{A}$ and asked the related question, what vector is rotated the most by $\mathbf{A}$? Such a vector is called an antieigenvector of $\mathbf{A}$, and its antieigenvalue is the angle of rotation. As discussed in \cite{bk:gustafson-2012}, antieigenvalues lead to a matrix trigonometry and have found numerous applications in, \textit{e.g.}, numerical analysis and finance. Additionally, $\mathbf{A}$'s antieigenvectors have a close relationship with its eigenvectors when $\mathbf{A}$ is Hermitian and positive definite.

In this work we go one step beyond antieigenvector analysis and consider the properties that $\mathbf{A}$ must satisfy for it to have an antieigenvalue of $\pi/2$. That is, which $\mathbf{A}$ rotate a vector to be orthogonal to its image? We thus seek to find $\mathbf{A}$ and $x$ satisfying
\begin{equation}
\left< x, \mathbf{A}x \right> = 0,
\label{eq:fremdervector}
\end{equation}
where we assume the usual inner product for complex vectors. As a play on words, we call such $x$ {\prefix}vectors. The word \emph{eigen} means ``own'' in German (the image vector belongs to its eigenvector's one-dimensional subspace), whereas \emph{\prefix} means ``stranger'' (the {\prefix}vector and its image share no common component).

Properties of the pencil $z\mathbf{I}-\mathbf{A}$ help develop the notion of a {\prefix}value. Recall that eigenvalues can alternatively be defined as values $z$ that make $z\mathbf{I}-\mathbf{A}$ singular. We define a {\prefix}value of $\mathbf{A}$ as $z$ such that $z\mathbf{I}-\mathbf{A}$ has an antieigenvalue of $\pi/2$.

The remainder of this note discusses properties that $\mathbf{A}$ must satisfy to have {\prefix}vectors and for $z$ to be a {\prefix}value. \Cref{sec:applications} begins by motivating {\prefix}vectors with a brief, and likely incomplete, overview of existing applications. We then summarize some pertinent aspects of matrix theory in \cref{sec:prelims}. Results on the existence of {\prefix}vectors and {\prefix}values for several cases are presented in \cref{sec:fremdervectors}. We finally conclude in \cref{sec:calculating} with a short discussion on calculating {\prefix}values and {\prefix}vectors.

\section{Known Applications}
\label{sec:applications}
{\Prefix}vectors and {\prefix}values are fundamentally interesting quantities that also have applications. Here we list a few, knowing that there are probably many others of which we are unaware. These applications do not typically consider the geometric interpretation of {\prefix}vectors, but use them nevertheless.

One application is in computer graphics. Many solid bodies of interest can be represented by quadric surfaces, and it may be desirable to find the intersection of two such surfaces \cite{levin-73-1979}. As we will discuss in \cref{sec:calculating}, this problem can be recast to use {\prefix}vectors.

Our idea for {\prefix}vectors originated in our research on nanotechnology, where we aim to understand how electric current flows through molecules. These systems are sufficiently small that quantum mechanics is required \cite{bk:cuevas-2010} and the system's properties are described by a linear operator (the Hamiltonian). Eigenvectors of the Hamiltonian relate to maxima in the electric conductance. We recently showed that certain minima in the conductance correspond to {\prefix}vectors of the Hamiltonian \cite{sam-ang-053002-2017}.

We know of two other applications for {\prefix}vectors in quantum mechanics. The first is in spectroscopy \cite{bk:cohen-tannoudji-1977}, where transitions between two quantum states are measured. In some cases the quantum state must change in response to the spectroscopic setup; such a quantum state is a {\prefix}vector of the transition matrix. The second is in condensed matter physics, where the existence of surface states in a material can be framed in terms of {\prefix}vectors \cite{dwivedi-134304-2016}.

\section{Preliminaries}
\label{sec:prelims}
Throughout this note we will denote the spectrum of $\mathbf{A}$ by $\lambda(\mathbf{A})$. Several basic results from matrix theory will also be useful in our discussion. We state them here without proof, referencing any standard text on linear algebra (\textit{e.g.}, \cite{bk:strang-2006}) for more information. 

First, $\mathbf{A}$ is normal if it has a complete set of orthogonal eigenvectors. Second, if $\mathbf{A}$ is Hermitian (\textit{i.e.}, self-adjoint, $\mathbf{A}=\mathbf{A}^\dagger$), then it is normal and its eigenvalues are real. In such a case, $\mathbf{A}$ is classified as positive (semi-)definite if all eigenvalues are positive (non-negative), negative (semi-)definite if all eigenvalues are negative (non-positive), or indefinite if $\mathbf{A}$ has both positive and negative eigenvalues. Third, we extend a similar classification scheme to skew-Hermitian matrices, $\mathbf{A}=-\mathbf{A}^\dagger$, which are normal and have purely imaginary eigenvalues. We use the signs of the eigenvalues' imaginary parts to determine positive definiteness, \emph{etc}. Finally, an arbitrary matrix $\mathbf{A}$ can be decomposed into a Hermitian part, $\mathbf{B}=(\mathbf{A}+\mathbf{A}^\dagger)/2$, and a skew-Hermitian part, $\mathbf{C}=(\mathbf{A}-\mathbf{A}^\dagger)/2$, such that $\mathbf{A}=\mathbf{B}+\mathbf{C}$.

\section{The Existence of {\Prefix}vectors and {\Prefix}values}
\label{sec:fremdervectors}
There are several trivial {\prefix}vectors to mention before proceeding. One is $x=0$, which clearly satisfies Eq.\ \cref{eq:fremdervector}. Any $x\in\ker(\mathbf{A})$ also satisfies Eq.\ \eqref{eq:fremdervector}, as does any $x\in\ker(\mathbf{A}^\dagger)$. In all three of these cases, the angle of rotation between $x$ and either $\mathbf{A}x$ or $\mathbf{A}^\dagger x$ is not well-defined, and might best be regarded as 0 because $x$ is an eigenvector of $\mathbf{A}$.

\begin{definition}
Any $x\in\ker(\mathbf{A})\cup\ker(\mathbf{A}^\dagger)$ is a trivial {\prefix}vector of $\mathbf{A}$.
\end{definition}

The remainder of our discussion will focus on nontrivial {\prefix}vectors. \Cref{thm:fremdervector-sum} provides one general result.

\begin{lemma}
Let $x$ be a {\prefix}vector of $\mathbf{A}$ and let $y\in\ker(\mathbf{A})\cap\ker(\mathbf{A}^\dagger)$. Then $x+y$ is also a {\prefix}vector of $\mathbf{A}$.
\label{thm:fremdervector-sum}
\end{lemma}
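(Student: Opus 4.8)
The plan is to expand the inner product $\langle x+y, \mathbf{A}(x+y)\rangle$ and show every term vanishes or cancels. First I would use bilinearity (conjugate-linearity in the first slot) to write
\[
\left< x+y, \mathbf{A}(x+y) \right> = \left< x, \mathbf{A}x \right> + \left< x, \mathbf{A}y \right> + \left< y, \mathbf{A}x \right> + \left< y, \mathbf{A}y \right>.
\]
The first term is $0$ because $x$ is a \prefix vector of $\mathbf{A}$, by hypothesis.

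Next I would dispatch the remaining three terms using $y\in\ker(\mathbf{A})\cap\ker(\mathbf{A}^\dagger)$. Since $y\in\ker(\mathbf{A})$, we have $\mathbf{A}y=0$, so $\left< x, \mathbf{A}y \right> = 0$ and $\left< y, \mathbf{A}y \right> = 0$. For the cross term $\left< y, \mathbf{A}x \right>$, I would move $\mathbf{A}$ across the inner product to its adjoint, $\left< y, \mathbf{A}x \right> = \left< \mathbf{A}^\dagger y, x \right>$, and then use $y\in\ker(\mathbf{A}^\dagger)$ to conclude $\mathbf{A}^\dagger y = 0$, hence this term is also $0$. Adding up, $\left< x+y, \mathbf{A}(x+y) \right> = 0$, so $x+y$ is a \prefix vector of $\mathbf{A}$, as claimed.

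This argument is essentially a one-line computation, so there is no serious obstacle; the only thing to be careful about is the conjugate-linearity convention for the inner product (the paper states it uses ``the usual inner product for complex vectors''), which is why the adjoint $\mathbf{A}^\dagger$ rather than the transpose appears when relocating $\mathbf{A}$, and why the kernel of $\mathbf{A}^\dagger$ (not $\mathbf{A}$) is exactly what is needed to kill the $\left< y,\mathbf{A}x\right>$ term. It is worth noting that the hypothesis $y\in\ker(\mathbf{A})\cap\ker(\mathbf{A}^\dagger)$ is used in full: membership in $\ker(\mathbf{A})$ handles the terms where $\mathbf{A}$ acts on $y$, and membership in $\ker(\mathbf{A}^\dagger)$ handles the lone term where $\mathbf{A}$ acts on $x$ but is paired against $y$.
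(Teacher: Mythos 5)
Your proof is correct and is exactly the ``straightforward'' computation the paper has in mind: it expands $\left< x+y, \mathbf{A}(x+y)\right>$ via Eq.~\cref{eq:fremdervector}, kills the terms with $\mathbf{A}y$ using $y\in\ker(\mathbf{A})$, and kills $\left< y,\mathbf{A}x\right>$ by passing to $\mathbf{A}^\dagger y=0$. The paper simply omits these details, so there is nothing to add or correct.
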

\begin{proof}
The proof is straightforward using Eq.\ \cref{eq:fremdervector}.
\end{proof}

\subsection{{\Prefix}vectors}
\label{subsec:exist-fremdervectors}
The simplest place to begin is with Hermitian matrices.

\begin{theorem}
Let $\mathbf{A}$ be Hermitian. Then $\mathbf{A}$ has a nontrivial {\prefix}vector if and only if it is indefinite.
\label{thm:hermitian}
\end{theorem}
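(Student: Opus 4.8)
The plan is to reduce everything to the real-valued quadratic form $q(x) = \langle x, \mathbf{A}x \rangle$. For Hermitian $\mathbf{A}$ one has $\overline{q(x)} = \langle \mathbf{A}x, x\rangle = \langle x, \mathbf{A}^\dagger x\rangle = q(x)$, so $q$ is real; moreover $\ker(\mathbf{A}) = \ker(\mathbf{A}^\dagger)$, so a \emph{nontrivial} fremdervector is exactly a vector $x \notin \ker(\mathbf{A})$ with $q(x) = 0$. Throughout I would use the spectral decomposition $\mathbf{A} = \sum_i \lambda_i P_i$ into orthogonal eigenprojections with real $\lambda_i$, which yields the convenient formula $q(x) = \sum_i \lambda_i \|P_i x\|^2$. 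The two implications are then handled separately.

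For the ``if'' direction, assume $\mathbf{A}$ is indefinite, pick a positive eigenvalue $\lambda_+$ with unit eigenvector $u$ and a negative eigenvalue $\lambda_-$ with unit eigenvector $v$, and recall that eigenvectors of a Hermitian matrix for distinct eigenvalues are orthogonal, so $\{u,v\}$ is orthonormal. Taking $x = \alpha u + \beta v$ gives $q(x) = |\alpha|^2 \lambda_+ + |\beta|^2 \lambda_-$, so the choice $|\alpha|^2 = -\lambda_-$, $|\beta|^2 = \lambda_+$ (both strictly positive) produces $q(x) = 0$. This $x$ is nonzero, and since $\mathbf{A}x = \alpha\lambda_+ u + \beta\lambda_- v \neq 0$ it does not lie in $\ker(\mathbf{A})$, so it is a genuine nontrivial fremdervector. (Alternatively one could note that the Rayleigh quotient is continuous on the connected unit sphere and takes both the positive value $\lambda_+$ and the negative value $\lambda_-$, hence vanishes somewhere; I would still prefer the explicit construction, since it makes nontriviality transparent.)

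For the ``only if'' direction I would prove the contrapositive. If $\mathbf{A}$ is not indefinite it is positive semidefinite or negative semidefinite, and because $x$ is a fremdervector of $\mathbf{A}$ precisely when it is one of $-\mathbf{A}$, it suffices to treat the positive semidefinite case. Then every $\lambda_i \geq 0$, so $q(x) = \sum_i \lambda_i \|P_i x\|^2 = 0$ forces $P_i x = 0$ for every $i$ with $\lambda_i > 0$; that is, $x$ lies in the $\lambda = 0$ eigenspace, which is exactly $\ker(\mathbf{A})$. Hence every fremdervector of $\mathbf{A}$ is trivial. (Equivalently, write $q(x) = \|\mathbf{A}^{1/2}x\|^2$ using the positive semidefinite square root and use $\ker(\mathbf{A}^{1/2}) = \ker(\mathbf{A})$.)

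I do not expect a serious obstacle in this proof; the only places that require care are the bookkeeping that the vector constructed in the indefinite case is nontrivial (checking $\mathbf{A}x \neq 0$, not merely $x \neq 0$) and the reduction of the negative semidefinite case to the positive semidefinite one via $\mathbf{A} \mapsto -\mathbf{A}$. These are exactly the details one must not skip when stitching the two implications together.
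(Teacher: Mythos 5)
Your proof is correct and follows essentially the same route as the paper: diagonalize the Hermitian matrix and reduce $\left<x,\mathbf{A}x\right>$ to the weighted sum $\sum_j |c_j|^2\lambda_j$ of real eigenvalues, which can vanish nontrivially exactly when eigenvalues of both signs are present. Your version simply spells out the details the paper leaves implicit --- the explicit two-eigenvector construction with the check that $\mathbf{A}x\neq 0$, and the contrapositive/semidefinite argument for the converse --- which is a welcome tightening but not a different approach.
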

\begin{proof}
\Cref{thm:fremdervector-sum} implies that the trivial {\prefix}vectors of $\mathbf{A}$ form a subspace. Without loss of generality, let $x$ be a nontrivial {\prefix}vector with no component in this subspace, and let $\lambda_j$ and $\varphi_j$ be the corresponding eigenvalues and normalized eigenvectors of $\mathbf{A}$. Write $x=\sum_j c_j \varphi_j$ such that, from Eq.\ \cref{eq:fremdervector},
\begin{equation}
0 = \left< x, \mathbf{A}x \right> = \sum_{j=1}^n \left| c_j \right|^2 \lambda_j.
\label{eq:hermitian}
\end{equation}
Equation \eqref{eq:hermitian} has nontrivial solutions if and only if $\mathbf{A}$ has at least one negative eigenvalue and at least one positive eigenvalue such that the expansion coefficients can be chosen to cancel the positive and negative terms.
\end{proof}





A similar result holds for skew-Hermitian matrices.

\begin{corollary}
Let $\mathbf{A}$ be skew-Hermitian. Then $\mathbf{A}$ has a nontrivial {\prefix}vector if and only if it is indefinite (that is, $\mathbf{A}$ has at least one eigenvalue with positive imaginary part and at least one eigenvalue with negative imaginary part).
\label{thm:skew-hermitian}
\end{corollary}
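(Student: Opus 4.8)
The plan is to reduce this statement to the already-proved Hermitian case, \cref{thm:hermitian}, via the elementary observation that multiplying a skew-Hermitian matrix by $i$ yields a Hermitian one. First I would set $\mathbf{M} = i\mathbf{A}$ and verify that $\mathbf{M}^\dagger = -i\mathbf{A}^\dagger = -i(-\mathbf{A}) = i\mathbf{A} = \mathbf{M}$, so $\mathbf{M}$ is Hermitian. Next I would note that the {\prefix}vector condition is unaffected by this rescaling: $\langle x, \mathbf{A}x \rangle = 0$ if and only if $\langle x, \mathbf{M}x \rangle = i \langle x, \mathbf{A}x \rangle = 0$, so $\mathbf{A}$ and $\mathbf{M}$ have exactly the same {\prefix}vectors.

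I would then reconcile the ``trivial'' versus ``nontrivial'' labels. Since $\mathbf{A}$ is skew-Hermitian, $\ker(\mathbf{A}^\dagger) = \ker(-\mathbf{A}) = \ker(\mathbf{A})$, and likewise $\ker(\mathbf{M}) = \ker(i\mathbf{A}) = \ker(\mathbf{A})$ and $\ker(\mathbf{M}^\dagger) = \ker(\mathbf{M}) = \ker(\mathbf{A})$; hence $\mathbf{A}$ and $\mathbf{M}$ share the same trivial {\prefix}vectors, and $x$ is a nontrivial {\prefix}vector of $\mathbf{A}$ exactly when it is one of $\mathbf{M}$. Finally I would match the definiteness classifications: writing the (purely imaginary) eigenvalues of $\mathbf{A}$ as $\lambda_j = i\mu_j$ with $\mu_j \in \mathbb{R}$, the eigenvalues of $\mathbf{M} = i\mathbf{A}$ are $-\mu_j$, so ``$\mathbf{A}$ has eigenvalues with imaginary parts of both signs'' is equivalent to ``$\mathbf{M}$ has both a positive and a negative eigenvalue,'' i.e.\ $\mathbf{M}$ is indefinite as a Hermitian matrix. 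Applying \cref{thm:hermitian} to $\mathbf{M}$ then gives the claim for $\mathbf{A}$.

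The only point requiring real care is the bookkeeping around the definition of ``indefinite'' for skew-Hermitian matrices: the paper defines it through the signs of the imaginary parts of the eigenvalues, so I must confirm that this sign pattern is preserved — up to an overall flip, which does not matter for the ``both signs present'' criterion — under $\mathbf{A} \mapsto i\mathbf{A}$. As an alternative that avoids the reduction, one could mimic the proof of \cref{thm:hermitian} directly: because skew-Hermitian matrices are normal, expand $x = \sum_j c_j \varphi_j$ in an orthonormal eigenbasis, and observe that Eq.\ \cref{eq:fremdervector} becomes $0 = \langle x, \mathbf{A}x \rangle = i \sum_{j=1}^n |c_j|^2 \mu_j$, which has a nontrivial solution (after removing any component in $\ker(\mathbf{A}) = \ker(\mathbf{A}^\dagger)$, the $\mu_j = 0$ eigenspace) if and only if the nonzero $\mu_j$ take both signs. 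I would present the reduction as the cleaner argument and mention the direct computation as a remark.
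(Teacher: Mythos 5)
Your proposal is correct and follows essentially the same route as the paper, which also multiplies by $i$ to obtain the Hermitian matrix $i\mathbf{A}$ and then invokes \cref{thm:hermitian}. Your version merely spells out the bookkeeping the paper leaves implicit (coincidence of trivial {\prefix}vectors via the kernels and the matching of the indefiniteness criteria under $\mathbf{A}\mapsto i\mathbf{A}$), which is fine but not a different argument.
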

\begin{proof}
From Eq.\ \cref{eq:fremdervector}, the {\prefix}vector $x$ must satisfy
\[
0 = \left< x, \mathbf{A}x \right> = \left< x, i\mathbf{A}x \right>.
\]
Because $i\mathbf{A}$ is Hermitian, the result follows from \cref{thm:hermitian}.
\end{proof}

The investigation of nontrivial {\prefix}vectors for arbitrary matrices is very similar to that of Hermitian matrices. The key difference is that we first decompose the matrix into its Hermitian and skew-Hermitian parts.

\begin{theorem}
Let $\mathbf{A}$ have Hermitian part $\mathbf{B}$ and skew-Hermitian part $\mathbf{C}$. If $x$ is a nontrivial {\prefix}vector of $\mathbf{A}$, then the following three conditions must hold.
\begin{enumerate}
\item[(i)] $x$ is a {\prefix}vector of $\mathbf{B}$.
\item[(ii)] $x$ is a {\prefix}vector of $\mathbf{C}$.
\item[(iii)] At least one of $\mathbf{B}$ and $\mathbf{C}$ is indefinite.
\end{enumerate}
\label{thm:arbitrary}
\end{theorem}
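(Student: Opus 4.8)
The plan is to decompose $\langle x, \mathbf{A}x\rangle$ via $\mathbf{A}=\mathbf{B}+\mathbf{C}$ and then separate it into real and imaginary parts. Starting from
\[
0 = \langle x, \mathbf{A}x\rangle = \langle x, \mathbf{B}x\rangle + \langle x, \mathbf{C}x\rangle ,
\]
I would first observe that $\langle x, \mathbf{B}x\rangle\in\mathbb{R}$ because $\mathbf{B}=\mathbf{B}^\dagger$, while $\langle x, \mathbf{C}x\rangle\in i\mathbb{R}$ because $\mathbf{C}=-\mathbf{C}^\dagger$. A sum of a real number and a purely imaginary number vanishes only if both terms vanish, so $\langle x, \mathbf{B}x\rangle = 0$ and $\langle x, \mathbf{C}x\rangle = 0$; these are exactly conditions (i) and (ii). (As in the proof of \Cref{thm:skew-hermitian}, condition (ii) may equivalently be read as $\langle x, i\mathbf{C}x\rangle = 0$ with $i\mathbf{C}$ Hermitian.)

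For condition (iii) I would argue by contradiction. Suppose neither $\mathbf{B}$ nor $\mathbf{C}$ is indefinite; by the classification recalled in \Cref{sec:prelims}, $\mathbf{B}$ is then positive or negative semidefinite, and likewise $i\mathbf{C}$ is positive or negative semidefinite. The key elementary fact is that for a semidefinite Hermitian matrix $\mathbf{M}$ one has $\langle x, \mathbf{M}x\rangle = 0 \iff \mathbf{M}x = 0$; this is immediate upon expanding $x$ in an orthonormal eigenbasis of $\mathbf{M}$, exactly as in \cref{eq:hermitian}, since all the eigenvalues share a sign. Applying this to $\mathbf{B}$ using (i) and to $i\mathbf{C}$ using (ii) gives $x\in\ker(\mathbf{B})\cap\ker(\mathbf{C})$. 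Then $\mathbf{A}x = \mathbf{B}x + \mathbf{C}x = 0$ and $\mathbf{A}^\dagger x = \mathbf{B}x - \mathbf{C}x = 0$, so $x\in\ker(\mathbf{A})\cap\ker(\mathbf{A}^\dagger)$, contradicting the hypothesis that $x$ is a nontrivial {\prefix}vector. Hence at least one of $\mathbf{B}$ and $\mathbf{C}$ must be indefinite.

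I do not expect a serious obstacle: the argument is just a bookkeeping of real and imaginary parts together with the semidefinite rigidity fact already used in \Cref{thm:hermitian}. The one point requiring care is handling ``not indefinite'' uniformly for the Hermitian and skew-Hermitian parts; for $\mathbf{C}$ this means passing to $i\mathbf{C}$ so that the sign convention for skew-Hermitian definiteness (signs of the imaginary parts of the eigenvalues) matches ordinary Hermitian semidefiniteness. It is also worth noting that (i)--(iii) are necessary but not sufficient: if, for instance, $x\in\ker(\mathbf{A})\cap\ker(\mathbf{A}^\dagger)$, then (i) and (ii) hold trivially while (iii) is a statement about $\mathbf{B}$ and $\mathbf{C}$ alone, so the result should be proved and stated as an implication rather than an equivalence.
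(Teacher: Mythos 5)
Your proposal is correct and follows essentially the same route as the paper: decompose $\mathbf{A}=\mathbf{B}+\mathbf{C}$, use the real/purely-imaginary split to get (i) and (ii), and prove (iii) by contradiction via the fact that a semidefinite (skew-)Hermitian part with vanishing quadratic form forces $x\in\ker(\mathbf{B})\cap\ker(\mathbf{C})$, hence triviality. Your version is just slightly more explicit than the paper's (spelling out the semidefinite rigidity step and landing in $\ker(\mathbf{A})\cap\ker(\mathbf{A}^\dagger)$ rather than merely $\ker(\mathbf{A})$), which does not change the argument.
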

\begin{proof}
From Eq.\ \cref{eq:fremdervector} and $\mathbf{A}=\mathbf{B}+\mathbf{C}$,
\[
0 = \left< x, \mathbf{A}x \right> = \left< x, \mathbf{B}x \right> + \left< x, \mathbf{C}x \right>.
\]
Notice that $\left<x, \mathbf{B}x \right>$ is real and $\left< x, \mathbf{C}x \right>$ is purely imaginary. Thus, both terms must be $0$ independently, meaning $x$ is a {\prefix}vector of both $\mathbf{B}$ and $\mathbf{C}$.

We show condition (iii) by contradiction. Suppose that neither $\mathbf{B}$ nor $\mathbf{C}$ is indefinite. From the proof of \cref{thm:hermitian}, $\mathbf{B}$ must then be positive or negative semi-definite such that $x$ is a trivial {\prefix}vector of $\mathbf{B}$. $x$ is similarly a trivial {\prefix}vector of $\mathbf{C}$. Finally, $x\in\ker(\mathbf{B})\cap\ker(\mathbf{C})$ implies $x\in\ker(\mathbf{A})$ such that $x$ is a trivial {\prefix}vector of $\mathbf{A}$. Thus, at least one of $\mathbf{B}$ and $\mathbf{C}$ must be indefinite for $\mathbf{A}$ to possess a nontrivial {\prefix}vector.
\end{proof}

\subsection{{\Prefix}values}
We have so far considered the properties $\mathbf{A}$ must satisfy to have nontrivial {\prefix}vectors, Let us now shift focus to the {\prefix}values of $\mathbf{A}$ and discuss bounds for their existence.

\begin{theorem}
Let $\mathbf{B}$ and $\mathbf{C}$ be the Hermitian and skew-Hermitian parts of $\mathbf{A}$, respectively. If $z$ is a {\prefix}value of $\mathbf{A}$, then the following three conditions hold.
\begin{enumerate}
\item[(i)] $\min \lambda(\mathbf{B}) \le \mathrm{Re}(z) \le \max \lambda(\mathbf{B})$.
\item[(ii)] $\min \mathrm{Im}(\lambda(\mathbf{C})) \le \mathrm{Im}(z) \le \max \mathrm{Im}(\lambda(\mathbf{C}))$.
\item[(iii)] Conditions (i) and (ii) cannot both be strict equalities. For instance, if $\mathrm{Re}(z)=\min\lambda(\mathbf{B})$, then $\min \mathrm{Im}(\lambda(\mathbf{C})) < \mathrm{Im}(z) < \max \mathrm{Im}(\lambda(\mathbf{C}))$.
\end{enumerate}
\label{thm:arbitrary-value}
\end{theorem}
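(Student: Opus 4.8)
The plan is to reduce the whole statement to \cref{thm:arbitrary} applied not to $\mathbf{A}$ but to the pencil matrix $z\mathbf{I}-\mathbf{A}$. The first step is the elementary observation that the Hermitian part of $z\mathbf{I}-\mathbf{A}$ is $\mathrm{Re}(z)\mathbf{I}-\mathbf{B}$ and its skew-Hermitian part is $i\,\mathrm{Im}(z)\mathbf{I}-\mathbf{C}$. Since $z$ being a {\prefix}value means precisely that $z\mathbf{I}-\mathbf{A}$ has a nontrivial {\prefix}vector $x$ (a nonzero vector orthogonal to its image), \cref{thm:arbitrary} applies and tells us at once that $x$ is a {\prefix}vector of both $\mathrm{Re}(z)\mathbf{I}-\mathbf{B}$ and $i\,\mathrm{Im}(z)\mathbf{I}-\mathbf{C}$, and that at least one of those two matrices is indefinite.

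The second step converts the {\prefix}vector conditions into the spectral bounds (i) and (ii). The eigenvalues of the Hermitian matrix $\mathrm{Re}(z)\mathbf{I}-\mathbf{B}$ are the numbers $\mathrm{Re}(z)-\lambda_j(\mathbf{B})$. Because the nonzero $x$ is a {\prefix}vector of this matrix, it can be neither positive definite nor negative definite (either would make $\langle x,(\mathrm{Re}(z)\mathbf{I}-\mathbf{B})x\rangle$ strictly signed), so it must have a nonpositive eigenvalue and a nonnegative eigenvalue; rearranging these two inequalities is exactly (i). For (ii) I would run the same argument on the Hermitian matrix $i(i\,\mathrm{Im}(z)\mathbf{I}-\mathbf{C})=-\mathrm{Im}(z)\mathbf{I}-i\mathbf{C}$, whose eigenvalues are $\mathrm{Im}(\lambda_j(\mathbf{C}))-\mathrm{Im}(z)$ and for which $x$ is still a {\prefix}vector; ``not positive definite and not negative definite'' then gives $\min\mathrm{Im}(\lambda(\mathbf{C}))\le\mathrm{Im}(z)\le\max\mathrm{Im}(\lambda(\mathbf{C}))$.

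The third step handles (iii). Suppose one of the equalities in (i) holds, say $\mathrm{Re}(z)=\min\lambda(\mathbf{B})$. Then every eigenvalue $\mathrm{Re}(z)-\lambda_j(\mathbf{B})$ is $\le 0$, so $\mathrm{Re}(z)\mathbf{I}-\mathbf{B}$ is negative semi-definite and in particular not indefinite. By the last clause of \cref{thm:arbitrary} the other matrix, $i\,\mathrm{Im}(z)\mathbf{I}-\mathbf{C}$, must be indefinite, meaning the Hermitian matrix $-\mathrm{Im}(z)\mathbf{I}-i\mathbf{C}$ has a strictly positive and a strictly negative eigenvalue; that forces the strict inequalities $\min\mathrm{Im}(\lambda(\mathbf{C}))<\mathrm{Im}(z)<\max\mathrm{Im}(\lambda(\mathbf{C}))$. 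The remaining three boundary cases ($\mathrm{Re}(z)=\max\lambda(\mathbf{B})$, and $\mathrm{Im}(z)$ equal to either extreme of $\mathrm{Im}(\lambda(\mathbf{C}))$) follow by the identical symmetric argument, so the equalities in (i) and (ii) can never hold simultaneously.

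I do not expect a conceptual obstacle here; the work is in the bookkeeping. The delicate points are translating the classifications ``indefinite,'' ``semi-definite,'' and ``definite'' of the skew-Hermitian piece $i\,\mathrm{Im}(z)\mathbf{I}-\mathbf{C}$ into sign conditions on $\mathrm{Im}(\lambda(\mathbf{C}))$ correctly after multiplying by $i$, and keeping straight that the nonzero vector $x$ may be only a \emph{trivial} {\prefix}vector of one of $\mathrm{Re}(z)\mathbf{I}-\mathbf{B}$ or $i\,\mathrm{Im}(z)\mathbf{I}-\mathbf{C}$ while remaining a nontrivial {\prefix}vector of $z\mathbf{I}-\mathbf{A}$ — this is precisely why (i) and (ii) are only weak inequalities on their own, yet cannot both be tight.
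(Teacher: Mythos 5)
Your proposal is correct and follows essentially the same route as the paper: identify $\mathrm{Re}(z)\mathbf{I}-\mathbf{B}$ and $i\,\mathrm{Im}(z)\mathbf{I}-\mathbf{C}$ as the Hermitian and skew-Hermitian parts of $z\mathbf{I}-\mathbf{A}$, apply \cref{thm:arbitrary} to rule out (semi-)definiteness patterns, and translate that into the spectral bounds (i)--(iii). You simply spell out the eigenvalue bookkeeping (including the multiplication by $i$ for the skew-Hermitian piece) that the paper leaves implicit, which is a fine, slightly more explicit rendering of the same argument.
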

\begin{proof}
By definition, $z$ is a {\prefix}value of $\mathbf{A}$ if $z\mathbf{I}-\mathbf{A}$ has a nontrivial {\prefix}vector. Invoking \cref{thm:arbitrary}, we thus need to consider the definiteness of the Hermitian and skew-Hermitian parts of $z\mathbf{I} - \mathbf{A}$.

$\mathrm{Re}(z)\mathbf{I} - \mathbf{B}$ is the Hermitian part of $z\mathbf{I}-\mathbf{A}$, and cannot be positive or negative definite by \cref{thm:arbitrary}. Thus, we have condition (i). In a similar fashion, $i\mathrm{Im}(z)\mathbf{I} - \mathbf{C}$ is the skew-Hermitian part of $z\mathbf{I} - \mathbf{A}$ and leads to condition (ii). Finally, condition (iii) of \cref{thm:arbitrary} requires $\mathrm{Re}(z)\mathbf{I}-\mathbf{B}$ and/or $i\mathrm{Im}(z)\mathbf{I} - \mathbf{C}$ to be indefinite, which implies condition (iii).
\end{proof}

\Cref{thm:arbitrary-value} can be specialized in the event that $\mathbf{A}$ is normal.

\begin{corollary}
Let $\mathbf{A}$ be normal. If $z$ is a {\prefix}value of $\mathbf{A}$, then the following three conditions hold.
\begin{enumerate}
\item[(i)] $\min \mathrm{Re}(\lambda(\mathbf{A})) \le \mathrm{Re}(z) \le \max \mathrm{Re}(\lambda(\mathbf{A}))$.
\item[(ii)] $\min \mathrm{Im}(\lambda(\mathbf{A})) \le \mathrm{Im}(z) \le \max \mathrm{Im}(\lambda(\mathbf{A}))$.
\item[(iii)] Conditions (i) and (ii) cannot both be strict equalities (similar to \cref{thm:arbitrary-value}).
\end{enumerate}
\label{thm:normal-value}
\end{corollary}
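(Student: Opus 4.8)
The plan is to reduce this corollary to \cref{thm:arbitrary-value} by computing the spectra of the Hermitian and skew-Hermitian parts of a normal matrix explicitly. First I would invoke the spectral theorem: since $\mathbf{A}$ is normal, it admits an orthonormal eigenbasis $\{\varphi_j\}$ with $\mathbf{A}\varphi_j = \lambda_j \varphi_j$, so $\mathbf{A} = \sum_j \lambda_j \varphi_j \varphi_j^\dagger$ and $\mathbf{A}^\dagger = \sum_j \overline{\lambda_j} \varphi_j \varphi_j^\dagger$. Forming $\mathbf{B} = (\mathbf{A}+\mathbf{A}^\dagger)/2$ and $\mathbf{C} = (\mathbf{A}-\mathbf{A}^\dagger)/2$ then gives $\mathbf{B} = \sum_j \mathrm{Re}(\lambda_j)\,\varphi_j\varphi_j^\dagger$ and $\mathbf{C} = \sum_j i\,\mathrm{Im}(\lambda_j)\,\varphi_j\varphi_j^\dagger$. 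Hence $\mathbf{B}$ and $\mathbf{C}$ share the eigenvectors $\varphi_j$, with eigenvalues $\mathrm{Re}(\lambda_j)$ and $i\,\mathrm{Im}(\lambda_j)$ respectively; equivalently, $\lambda(\mathbf{B}) = \{\mathrm{Re}(\lambda) : \lambda\in\lambda(\mathbf{A})\}$ and $\mathrm{Im}(\lambda(\mathbf{C})) = \{\mathrm{Im}(\lambda) : \lambda\in\lambda(\mathbf{A})\}$.

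With these identities in hand, the three conditions follow by direct substitution into the corresponding conditions of \cref{thm:arbitrary-value}: $\min\lambda(\mathbf{B})$ and $\max\lambda(\mathbf{B})$ become $\min\mathrm{Re}(\lambda(\mathbf{A}))$ and $\max\mathrm{Re}(\lambda(\mathbf{A}))$, giving condition (i); likewise $\min$ and $\max$ of $\mathrm{Im}(\lambda(\mathbf{C}))$ become $\min$ and $\max$ of $\mathrm{Im}(\lambda(\mathbf{A}))$, giving condition (ii); and the ``not both strict equalities'' statement transfers verbatim to give condition (iii).

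I expect the only point requiring any care is the bookkeeping for repeated or clustered eigenvalues --- distinct eigenvalues of $\mathbf{A}$ can collapse to a single eigenvalue of $\mathbf{B}$ when they happen to share a real part --- but since \cref{thm:arbitrary-value} references only the extreme values $\min$ and $\max$ of the relevant spectra, this collapsing is harmless. An alternative, essentially equivalent route avoids the spectral decomposition of $\mathbf{A}$ altogether: normality is equivalent to $\mathbf{B}$ and $\mathbf{C}$ commuting, hence they are simultaneously unitarily diagonalizable, and a joint eigenvector with $\mathbf{B}$-eigenvalue $\beta$ and $\mathbf{C}$-eigenvalue $i\gamma$ is an eigenvector of $\mathbf{A} = \mathbf{B}+\mathbf{C}$ with eigenvalue $\beta + i\gamma$; this again yields $\mathrm{Re}(\lambda(\mathbf{A})) = \lambda(\mathbf{B})$ and $\mathrm{Im}(\lambda(\mathbf{A})) = \mathrm{Im}(\lambda(\mathbf{C}))$, and the corollary follows from \cref{thm:arbitrary-value}.
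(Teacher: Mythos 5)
Your proposal is correct and follows essentially the same route as the paper: the paper's proof also reduces the corollary to \cref{thm:arbitrary-value} by noting that $\lambda(\mathbf{B})=\mathrm{Re}(\lambda(\mathbf{A}))$ and $\lambda(\mathbf{C})=i\,\mathrm{Im}(\lambda(\mathbf{A}))$ when $\mathbf{A}$ is normal. The only difference is that you spell out the spectral-decomposition justification of these identities, which the paper simply cites as known facts.
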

\begin{proof}
Let $\mathbf{B}$ and $\mathbf{C}$ be the Hermitian and skew-Hermitian parts of $\mathbf{A}$, respectively. This corollary follows directly from the facts that $\lambda(\mathbf{B})=\mathrm{Re}(\lambda(\mathbf{A}))$ and $\lambda(\mathbf{C})=i\mathrm{Im}(\lambda(\mathbf{A}))$ when $\mathbf{A}$ is normal.
\end{proof}

\Cref{thm:arbitrary-value} and \cref{thm:normal-value} provide bounds for {\prefix}values. In the event $\mathbf{A}$ is (skew-)Hermitian, we can also show the existence of {\prefix}values.

\begin{corollary}
Let $\mathbf{A}$ be Hermitian. $z$ is a {\prefix}value of $\mathbf{A}$ if and only if $z\in\mathbb{R}$ and $\min \lambda(\mathbf{A}) < z < \max \lambda(\mathbf{A})$. A similar statement using imaginary parts holds if $\mathbf{A}$ is skew-Hermitian instead.
\label{thm:hermitian-value}
\end{corollary}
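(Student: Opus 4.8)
The plan is to dispatch the Hermitian case by quoting the machinery already built, and then reduce the skew-Hermitian case to it. For the ``only if'' direction, suppose $z$ is a {\prefix}value of the Hermitian matrix $\mathbf{A}$. Since $\mathbf{A}=\mathbf{A}^\dagger$, its Hermitian part is $\mathbf{B}=\mathbf{A}$ and its skew-Hermitian part is $\mathbf{C}=\mathbf{0}$, so $\lambda(\mathbf{B})=\lambda(\mathbf{A})$ while $\lambda(\mathbf{C})=\{0\}$. I would then invoke \cref{thm:arbitrary-value} directly: condition (ii) reads $0\le\mathrm{Im}(z)\le 0$ and forces $z\in\mathbb{R}$; condition (i) gives $\min\lambda(\mathbf{A})\le z\le\max\lambda(\mathbf{A})$; and because condition (ii) is a two-sided equality, condition (iii) prohibits condition (i) from being an equality at either endpoint, leaving the strict inequalities $\min\lambda(\mathbf{A})<z<\max\lambda(\mathbf{A})$.

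For the ``if'' direction, assume $z\in\mathbb{R}$ with $\min\lambda(\mathbf{A})<z<\max\lambda(\mathbf{A})$. Then $z\mathbf{I}-\mathbf{A}$ is Hermitian with eigenvalues $\{z-\lambda_j : \lambda_j\in\lambda(\mathbf{A})\}$. The hypothesis furnishes the eigenvalue $z-\min\lambda(\mathbf{A})>0$ together with the eigenvalue $z-\max\lambda(\mathbf{A})<0$, so $z\mathbf{I}-\mathbf{A}$ is indefinite. \Cref{thm:hermitian} then guarantees that $z\mathbf{I}-\mathbf{A}$ has a nontrivial {\prefix}vector, which is exactly the statement that $z$ is a {\prefix}value of $\mathbf{A}$. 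Note the degenerate case $\min\lambda(\mathbf{A})=\max\lambda(\mathbf{A})$ (a scalar matrix) is handled consistently: the open interval is empty and, correspondingly, $z\mathbf{I}-\mathbf{A}$ is never indefinite, so no {\prefix}value exists.

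The skew-Hermitian case follows the device used in \cref{thm:skew-hermitian}. Since $\langle x,(z\mathbf{I}-\mathbf{A})x\rangle=0$ if and only if $\langle x, i(z\mathbf{I}-\mathbf{A})x\rangle=0$, and since $i(z\mathbf{I}-\mathbf{A})=(iz)\mathbf{I}-i\mathbf{A}$ has the same kernel and the same adjoint kernel as $z\mathbf{I}-\mathbf{A}$, the scalar $z$ is a {\prefix}value of $\mathbf{A}$ precisely when $iz$ is a {\prefix}value of the Hermitian matrix $i\mathbf{A}$. Applying the Hermitian case to $i\mathbf{A}$ and translating back through $\lambda(i\mathbf{A})=-\mathrm{Im}(\lambda(\mathbf{A}))$ and $iz=-\mathrm{Im}(z)$ converts ``$iz\in\mathbb{R}$ and $\min\lambda(i\mathbf{A})<iz<\max\lambda(i\mathbf{A})$'' into ``$z$ is purely imaginary and $\min\mathrm{Im}(\lambda(\mathbf{A}))<\mathrm{Im}(z)<\max\mathrm{Im}(\lambda(\mathbf{A}))$,'' which is the claimed imaginary-part version.

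I expect the main obstacle to be bookkeeping rather than any genuine difficulty. Two points need care: first, one must check that passing from $z\mathbf{I}-\mathbf{A}$ to $i(z\mathbf{I}-\mathbf{A})$ really preserves the distinction between trivial and nontrivial {\prefix}vectors (it does, since multiplication by $i$ changes neither $\ker(z\mathbf{I}-\mathbf{A})$ nor $\ker((z\mathbf{I}-\mathbf{A})^\dagger)$); and second, one must correctly track how $\min$ and $\max$ interchange, and signs flip, when the bounds on $\lambda(i\mathbf{A})$ and on $iz$ are rewritten in terms of $\mathrm{Im}(\lambda(\mathbf{A}))$ and $\mathrm{Im}(z)$.
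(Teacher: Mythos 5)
Your proposal is correct and follows essentially the same route as the paper: the forward direction extracts $\mathrm{Im}(z)=0$ and the strict real bounds from the bounds theorem (you cite \cref{thm:arbitrary-value} with $\mathbf{B}=\mathbf{A}$, $\mathbf{C}=\mathbf{0}$, the paper its specialization \cref{thm:normal-value}), and the converse observes that $z\mathbf{I}-\mathbf{A}$ is indefinite and applies \cref{thm:hermitian}. Your explicit reduction of the skew-Hermitian case to the Hermitian one via multiplication by $i$ is just a careful spelling-out of the ``similar logic'' the paper leaves to the reader, in the spirit of \cref{thm:skew-hermitian}.
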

\begin{proof}
($\Rightarrow$) Appealing to \cref{thm:normal-value}, $\min \lambda(\mathbf{A})\le \mathrm{Re}(z) \le \max\lambda(\mathbf{A})$ and $0\le \mathrm{Im}(z)\le 0$, with at most one equality holding. Clearly $\mathrm{Im}(z)=0$, implying that $z\in\mathbb{R}$ and $\min \lambda(\mathbf{A}) < z < \max\lambda(\mathbf{A})$. ($\Leftarrow$) $z\mathbf{I}-\mathbf{A}$ is indefinite when $\min \lambda(\mathbf{A}) < z < \max \lambda(\mathbf{A})$, which implies the existence of a nontrivial {\prefix}vector (\cref{thm:hermitian}).

Similar logic shows the corresponding statement when $\mathbf{A}$ is skew-Hermitian.
\end{proof}

\section{Finding {\Prefix}vectors and {\Prefix}values}
\label{sec:calculating}
The previous section detailed the existence of {\prefix}vectors and {\prefix}values of a matrix $\mathbf{A}$. We conclude this note with a brief discussion on calculating {\prefix}vectors and {\prefix}values. We hope this note inspires the development of additional tools.

In the most general case, a {\prefix}vector sits at the intersection of two quadric hypersurfaces. Equation \cref{eq:hermitian} shows that the {\prefix}vectors of the Hermitian part of $\mathbf{A}$ describe a quadric hypersurface when they are interpreted as displacement vectors from the origin. A similar result holds for the skew-Hermitian part. Because a {\prefix}vector of $\mathbf{A}$ must be a {\prefix}vector of both the Hermitian and skew-Hermitian parts of $\mathbf{A}$ by \cref{thm:arbitrary}, the {\prefix}vectors describe the intersection of two quadric hypersurfaces. This problem is generally discussed in \cite{thesis:reid-1972}. Furthermore, computer graphics has a longstanding interest in solving this problem \cite{levin-73-1979} --- usually in 3 or 4 dimensions --- and has developed algorithms that might be adaptable to the {\prefix}vector problem.

If $\mathbf{A}$ is normal, the situation simplifies because the quadric surfaces from $\mathbf{A}$'s Hermitian and skew-Hermitian parts share principal axes. In this case, the problem of finding {\prefix}vectors reduces to finding nonnegative solutions of a linear system.

\begin{theorem}
Let $\mathbf{A}$ be normal with eigenvalues $\lambda_1,\ldots,\lambda_n$. Every solution of the problem
\begin{subequations}
\label{eq:normal}
\begin{align}
0 & = \sum_{j=1}^n d_j \mathrm{Re}(\lambda_j), \label{eq:normal:real} \\
0 & = \sum_{j=1}^n d_j \mathrm{Im}(\lambda_j), \label{eq:normal:imag}
\end{align}
\end{subequations}
subject to $d_j\ge 0$ corresponds to a {\prefix}vector of $\mathbf{A}$.
\label{thm:normal}
\end{theorem}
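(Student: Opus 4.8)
The plan is to exploit the spectral theorem. Since $\mathbf{A}$ is normal, it admits an orthonormal eigenbasis $\varphi_1,\ldots,\varphi_n$ with $\mathbf{A}\varphi_j = \lambda_j\varphi_j$, so any vector may be written $x = \sum_{j=1}^n c_j\varphi_j$ with $c_j\in\mathbb{C}$. The goal is to identify exactly when such an $x$ satisfies Eq.\ \cref{eq:fremdervector} and to recognize \cref{eq:normal} as that condition.

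First I would compute, using orthonormality of the $\varphi_j$,
\[
\left< x, \mathbf{A}x \right> = \sum_{j=1}^n \left|c_j\right|^2 \lambda_j,
\]
the complex-spectrum analogue of Eq.\ \cref{eq:hermitian}. Writing $d_j = \left|c_j\right|^2$, the condition $\left<x,\mathbf{A}x\right> = 0$ separates into its real and imaginary parts, which are precisely Eqs.\ \cref{eq:normal:real} and \cref{eq:normal:imag}; and the $d_j$ produced this way are automatically nonnegative.

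For the implication the theorem asserts I would run this backwards. Given a nonnegative solution $(d_1,\ldots,d_n)$ of \cref{eq:normal}, set $c_j = \sqrt{d_j}$ --- well defined and real precisely because $d_j\ge 0$ --- and $x = \sum_{j=1}^n \sqrt{d_j}\,\varphi_j$. The displayed identity then gives $\left<x,\mathbf{A}x\right> = \sum_j d_j\lambda_j = 0$, so $x$ is a {\prefix}vector. The correspondence is many-to-one: replacing each $c_j$ by $e^{i\theta_j}\sqrt{d_j}$ for arbitrary phases $\theta_j$ leaves $\left|c_j\right|^2$ unchanged, so a single solution of \cref{eq:normal} yields a whole family of {\prefix}vectors; the trivial solution $d_j\equiv 0$ corresponds to $x=0$.

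I do not expect a genuine obstacle here; the only point needing care is what ``corresponds to'' should mean, which the phase freedom above pins down. It is worth stressing that the constraint $d_j\ge 0$ is essential rather than cosmetic: it is exactly the condition making each $d_j$ the squared modulus of some complex number, hence the condition under which the reconstruction $c_j=\sqrt{d_j}$ is possible. Allowing negative $d_j$ would enlarge the solution set of the linear system, but those extra solutions would not arise from any vector, so both directions of the argument rely on nonnegativity.
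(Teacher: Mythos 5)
Your proof is correct, and it takes a slightly different (and in one respect more complete) route than the paper. The paper's proof funnels through \cref{thm:arbitrary}: it introduces the Hermitian and skew-Hermitian parts $\mathbf{B}$ and $\mathbf{C}$, uses simultaneous diagonalizability, and evaluates $\left<x,\mathbf{B}x\right>=\sum_j|c_j|^2\mathrm{Re}(\lambda_j)$ and $\left<x,\mathbf{C}x\right>=\sum_j|c_j|^2 i\,\mathrm{Im}(\lambda_j)$ separately, starting from the assumption that $x$ is a {\prefix}vector and deriving the system \cref{eq:normal}. You instead compute the single scalar $\left<x,\mathbf{A}x\right>=\sum_j|c_j|^2\lambda_j$ in the eigenbasis and split it into real and imaginary parts, which is logically equivalent (since $\left<x,\mathbf{B}x\right>$ and $\left<x,\mathbf{C}x\right>$ are exactly those real and imaginary pieces) but bypasses the $\mathbf{B}/\mathbf{C}$ decomposition and the appeal to \cref{thm:arbitrary} entirely. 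More importantly, you explicitly prove the implication the theorem actually states --- from a nonnegative solution $(d_j)$ you reconstruct $x=\sum_j\sqrt{d_j}\,\varphi_j$ and verify $\left<x,\mathbf{A}x\right>=0$ --- whereas the paper's written proof only establishes the converse direction ({\prefix}vector $\Rightarrow$ solution), leaving the stated direction implicit in the reversibility of the computation. Your observations about the phase freedom $c_j=e^{i\theta_j}\sqrt{d_j}$ and the essential role of $d_j\ge 0$ sharpen what ``corresponds to'' means. What the paper's framing buys is the connection to its structural results (the intersection-of-quadrics picture via $\mathbf{B}$ and $\mathbf{C}$); what yours buys is a self-contained argument that proves the asserted implication directly.
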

\begin{proof}
Let $\varphi_j$ be a normalized eigenvector of $\mathbf{A}$ associated with eigenvalue $\lambda_j$ and let $\mathbf{B}$ and $\mathbf{C}$ be the Hermitian and skew-Hermitian parts of $\mathbf{A}$, respectively. Because $\mathbf{A}$ is normal, $\mathbf{A}$, $\mathbf{B}$, and $\mathbf{C}$ are simultaneously diagonalizable. Moreover, the spectrum of $\mathbf{B}$ is $\{\mathrm{Re}(\lambda_j)\}$; likewise, $\{i\mathrm{Im}(\lambda_j)\}$ is the spectrum of $\mathbf{C}$.

Suppose $x=\sum_j c_j \varphi_j$ is a {\prefix}vector of $\mathbf{A}$. From \cref{thm:arbitrary}, we have that $x$ is a {\prefix}vector of both $\mathbf{B}$ and $\mathbf{C}$. Mirroring logic from the proof of \cref{thm:hermitian},
\[
0 = \left< x, \mathbf{B}x \right> = \sum_{j=1}^n \left| c_j \right|^2 \mathrm{Re}(\lambda_j).
\]
Similarly,
\[
0 = \left< x, \mathbf{C}x \right> = \sum_{j=1}^n \left| c_j \right|^2 i \mathrm{Im}(\lambda_j) = \sum_{j=1}^n \left| c_j \right|^2 \mathrm{Im}(\lambda_j).
\]
Rewriting $d_j=\left|c_j\right|^2$ in the above equations gives a linear system where we want nonnegative solutions.
\end{proof}

\Cref{thm:normal} simplifies if $\mathbf{A}$ is Hermitian or skew-Hermitian because Eq.\ \cref{eq:normal:real} or \cref{eq:normal:imag}, respectively, is trivially satisfied. Furthermore, adding the equation $\sum_j d_j = 1$ to the system excludes the trivial solution $x=0$ without loss of generality.

One last result is somewhat of a corner case that comes from our research in nanotechnology \cite{sam-ang-053002-2017}. The conductance minima mentioned in \cref{sec:applications} are captured by a generalized eigenvalue problem that may have {\prefix}vector solutions. The key condition is that the skew-Hermitian part of $\mathbf{A}$ is negative semi-definite, and the present discussion readily generalizes to cases where either the Hermitian or skew-Hermitian part is positive or negative semi-definite.

\begin{theorem}
Let $\mathbf{B}$ and $\mathbf{C}$ be the Hermitian and skew-Hermitian parts of $\mathbf{A}$, respectively, with $\mathbf{C}$ either positive or negative semi-definite. Furthermore, let $\mathbf{P}$ be an orthogonal projector onto $\ker(\mathbf{C})$. If $x\in\ker(\mathbf{C})$ and $z$ are an eigenvector/eigenvalue pair of the generalized eigenvalue problem
\begin{equation}
\mathbf{PBP}x = z \mathbf{P}x,
\label{eq:geneig}
\end{equation}
then $z$ is a {\prefix}value of $\mathbf{A}$ and $x$ is a {\prefix}vector. A similar result holds if $\mathbf{B}$ is positive or negative semi-definite instead of $\mathbf{C}$.
\end{theorem}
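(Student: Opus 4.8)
The plan is to verify directly that $x$ satisfies the defining relation \cref{eq:fremdervector} with $\mathbf{A}$ replaced by $z\mathbf{I}-\mathbf{A}$; this is exactly the statement that $x$ is a fremdervector of $z\mathbf{I}-\mathbf{A}$, and hence that $z$ is a fremdervalue of $\mathbf{A}$. First I would record the two elementary facts that drive everything: since $x\in\ker(\mathbf{C})$ we have $\mathbf{C}x=0$, and since $\mathbf{P}$ is the orthogonal projector onto $\ker(\mathbf{C})$ we have both $\mathbf{P}x=x$ and $\mathbf{P}^\dagger=\mathbf{P}$. Feeding $\mathbf{P}x=x$ into the generalized eigenvalue equation \cref{eq:geneig} collapses it to $\mathbf{P}\mathbf{B}x=zx$.

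Next I would pair this identity with $x$ in the inner product and use self-adjointness of $\mathbf{P}$ together with $\mathbf{P}x=x$,
\[
z\left<x,x\right> = \left<x,\mathbf{P}\mathbf{B}x\right> = \left<\mathbf{P}x,\mathbf{B}x\right> = \left<x,\mathbf{B}x\right>.
\]
Writing $z\mathbf{I}-\mathbf{A}=(z\mathbf{I}-\mathbf{B})-\mathbf{C}$ and invoking $\mathbf{C}x=0$ then yields
\[
\left<x,(z\mathbf{I}-\mathbf{A})x\right> = z\left<x,x\right> - \left<x,\mathbf{B}x\right> - \left<x,\mathbf{C}x\right> = 0 ,
\]
which is the desired conclusion; $x$ is nonzero because it is an eigenvector. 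Strictly, the fremdervector produced is nontrivial in the sense of the earlier definitions exactly when $(z\mathbf{I}-\mathbf{A})x\neq 0$, i.e.\ when $z\notin\lambda(\mathbf{A})$; the remaining case is the degenerate one in which $z$ is already an eigenvalue of $\mathbf{A}$. The stated analogue is obtained by interchanging the roles of $\mathbf{B}$ and $\mathbf{C}$: when $\mathbf{B}$ is semi-definite one lets $\mathbf{P}$ project onto $\ker(\mathbf{B})$, replaces \cref{eq:geneig} by $\mathbf{P}\mathbf{C}\mathbf{P}x=z\mathbf{P}x$, and repeats the computation with $\mathbf{B}x=0$ in place of $\mathbf{C}x=0$.

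I do not expect a real obstacle here --- the computation is a couple of lines. The only point needing care is the bookkeeping with the projector: one must use $\mathbf{P}x=x$ (not merely that $\mathbf{P}x\in\ker(\mathbf{C})$) and the Hermiticity $\mathbf{P}^\dagger=\mathbf{P}$, which is what makes the $\mathbf{P}\mathbf{B}\mathbf{P}$ term and the $\mathbf{P}x$ term on the two sides of \cref{eq:geneig} simplify compatibly. It is also worth noting in the write-up that the semi-definiteness hypothesis on $\mathbf{C}$ (or $\mathbf{B}$) is not actually needed for this implication; its purpose is conceptual, since for a semi-definite $\mathbf{C}$ one has $\left<x,\mathbf{C}x\right>=0$ only when $\mathbf{C}x=0$, so that restricting to $x\in\ker(\mathbf{C})$ and to \cref{eq:geneig} captures \emph{every} fremdervector rather than a special family --- consistent with the necessary conditions of \cref{thm:arbitrary}.
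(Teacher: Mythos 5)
Your proof is correct and follows essentially the same route as the paper: both verify $\left< x, (z\mathbf{I}-\mathbf{A})x \right> = 0$ directly from Eq.\ \cref{eq:geneig} using $\mathbf{P}x=x$, $\mathbf{P}^\dagger=\mathbf{P}$, and $\mathbf{C}x=0$ (the paper reaches $x=\mathbf{P}x$ via \cref{thm:arbitrary}, but, as you observe, the semi-definiteness of $\mathbf{C}$ is not needed for this implication itself). One small caution about your aside: nontriviality of the resulting {\prefix}vector also requires $x\notin\ker\left((z\mathbf{I}-\mathbf{A})^\dagger\right)$, and $(z\mathbf{I}-\mathbf{A})x\neq 0$ is not equivalent to $z\notin\lambda(\mathbf{A})$, only to $x$ not being an eigenvector of $\mathbf{A}$ for the eigenvalue $z$.
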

\begin{proof}
Notice that Eq.\ \cref{eq:geneig} is Hermitian such that $z\in\mathbb{R}$. Then, from \cref{thm:arbitrary}, any {\prefix}vector $x$ of $z\mathbf{I}-\mathbf{A}$ must be a {\prefix}vector of $\mathbf{C}$, requiring $x\in\ker(\mathbf{C})$. Thus, $x=\mathbf{P}x$. Finally, $x$ and $z$ satisfying Eq.\ \cref{eq:geneig} implies
\begin{align*}
\left< x, (z\mathbf{I} - \mathbf{A}) x \right> & = \left< x, (z\mathbf{I} - \mathbf{B} - \mathbf{C}) x \right> \\
& = \left< x, (z\mathbf{I} - \mathbf{B}) x \right> \\
& = \left< \mathbf{P} x, (z\mathbf{I} - \mathbf{B}) \mathbf{P} x \right> \\
& = \left< x, \mathbf{P} (z\mathbf{I} - \mathbf{B}) \mathbf{P} x \right> \\
& = z \left< x, \mathbf{P} x \right> - \left< x, \mathbf{PBP} x \right> \\
& = 0.
\end{align*}
Hence, $x$ is a {\prefix}vector of $\mathbf{A}$ and $z$ is a {\prefix}value.
\end{proof}

\section*{Acknowledgments}
I thank Geoff Oxberry, Jay Bardhan, Andrew Mullhaupt, and David Keyes for helpful conversations.

\bibliographystyle{siamplain}
\bibliography{/Users/mgreuter/Documents/Publications/library}

\begin{thebibliography}{1}

\bibitem{bk:cohen-tannoudji-1977}
{\sc C.~Cohen-Tannoudji, B.~Diu, and F.~Lalo{\"e}}, {\em {Quantum Mechanics}},
  John Wiley {\&} Sons, New York, NY, USA, 1977.

\bibitem{bk:cuevas-2010}
{\sc J.~C. Cuevas and E.~Scheer}, {\em {Molecular Electronics}}, World
  Scientific, Hackensack, NJ, USA, 2010.

\bibitem{dwivedi-134304-2016}
{\sc V.~Dwivedi and V.~Chua}, {\em {Of Bulk and Boundaries: Generalized
  Transfer Matrices for Tight-Binding Models}}, Phys. Rev. B, 93 (2016),
  p.~134304.

\bibitem{bk:gustafson-2012}
{\sc K.~Gustafson}, {\em {Antieigenvalue Analysis}}, World Scientific,
  Hackensack, NJ, USA, 2012.

\bibitem{levin-73-1979}
{\sc J.~Z. Levin}, {\em {Mathematical Models for Determining the Intersections
  of Quadric Surfaces}}, Comput. Graphics Image Process., 11 (1979),
  pp.~73--87.

\bibitem{thesis:reid-1972}
{\sc M.~Reid}, {\em {The Complete Intersection of Two or More Quadrics}}, PhD
  thesis, Trinity College, Cambridge, June 1972.

\bibitem{sam-ang-053002-2017}
{\sc P.~Sam-ang and M.~G. Reuter}, {\em {Characterizing Destructive Quantum
  Interference in Electron Transport}}, New J. Phys., 19 (2017), p.~053002.

\bibitem{bk:strang-2006}
{\sc G.~Strang}, {\em {Linear Algebra and Its Applications}}, Thomson
  Brooks/Cole, Belmont, CA, USA, 4~ed., 2006.

\end{thebibliography}

\end{document}